\documentclass[12pt]{article}
\usepackage{amsmath,amssymb,amsthm}

\title{Another proof of Harer-Zagier formula}
\author{Boris Pittel}

\begin{document}
\maketitle
{\center\small Mathematics Subject Classifications: 05C80, 05C30, 05A16, 05E10, 34E05, 60C05}
{\center\small Keywords: surfaces, chord diagrams, genus, random permutations,  Fourier
transform, irreducible characters, Murnaghan-Nakayama, generating  functions}
\def\si{\par\smallskip\noindent}
\def\bi{\par\bigskip\noindent}
\def\pr{\text{ P\/}}
\def\ex{\text{E\/}}
\def\de{\delta}
\def\eps{\varepsilon}
\def\la{\lambda}
\def\a{\alpha}
\def\be{\beta}
\def\de{\Delta}
\def\sig{\sigma}
\def\ga{\gamma}
\def\part{\partial}
\def\Cal{\mathcal}
\def\var{\text{Var\/}}

\newtheorem{Theorem}{Theorem}[section]
\newtheorem{Lemma}{Lemma}[section]
\newtheorem{Proposition}{Proposition}[section]
\newtheorem{Corollary}{Corollary}[section]
\newtheorem{Conjecture}{Conjecture}[section]
\numberwithin{equation}{section}

\begin{abstract} For a regular $2n$-gon there are $(2n-1)!!$ ways to match and glue the $2n$ sides.
The Harer-Zagier bivariate generating function enumerates the gluings by $n$ and the genus $g$
of the attendant surface and leads to a recurrence equation for the counts of gluings with
parameters $n$ and $g$. This formula was originally obtained by using the multidimensional Gaussian integrals. Soon after Jackson and later Zagier found alternative proofs that used the
symmetric group characters. In this note we give a different, characters-based, proof. Its core
is computing and marginally inverting Fourier transform of the underlying probability measure on $S_{2n}$. Aside from Murnaghan-Nakayama rule for one-hook diagrams, the counting techniques we use are of elementary, combinatorial nature.
\end{abstract}
 \section{Introduction and main results}
 Consider a regular, oriented,  $2n$-gon. There are $(2n-1)!!$ ways to match and glue $2n$-sides  
 observing head-to-tail constraint in each glued pair. Each such gluing produces an one-face map
 on an oriented surface. Let $\eps_g(n)$ denote the total number of gluings resulting in
 a surface of genus $g$. Thirty years ago Harer and Zagier \cite{HarerZagier} discovered that
 \begin{equation}\label{HarZag}
 1+2xy +2\sum_{n=1}^{\infty}\frac{x^{n+1}}{(2n-1)!!}\sum_g \eps_g(n)y^{n+1-2g}=
 \left(\frac{1+x}{1-x}\right)^y;
 \end{equation}
 here $n+1-2g$ is the number of vertices in the map on the surface. Of course, 
 $\eps_g(n)/(2n-1)!!$ is the probability that the uniformly random gluing generates a surface
 of genus $g$.  So, introducing the random variable $G_n$, $n\ge 1$, genus of the random surface,
so that $V_n:=n+1-G_n$ is the number of vertices on the surface map, and setting $G_0=1$, we rewrite \eqref{HarZag} as
 \begin{equation}\label{HZ,prob}
 1+2\sum_{n=0}^{\infty}x^{n+1}\ex\bigl[y^{n+1-2G_n}\bigr]= \left(\frac{1+x}{1-x}\right)^y.
 \end{equation}
 Their  proof used a powerful technique based on multidimensional integrals with
 respect to a Gaussian measure on $\Bbb R^k$. The identity \eqref{HarZag} implied a remarkable 
 $3$-term recurrence for the counts $\eps_g(n)$. A year later Jackson \cite{Jackson} found  a group characters-based derivation of an explicit  formula for those counts, expressed through Stirling numbers
 of both kinds, and used it to prove the recurrence anew. Subsequently Itzykson and Zuber \cite{ItzyksonZuber} reduced the combinatorial calculations in \cite{HarerZagier}. In $1995$ Zagier found another, shorter, 
 proof of \eqref{HarZag} based on group characters, see \cite{Zagier} and also Zagier's Appendix to the book \cite{LandoZvonkin} by Lando and Zvonkin. In $2001$ Lass \cite{Lass} gave a combinatorial 
 derivation based upon the enumeration of arborescences and Euler circuits. 
  
The Harer-Zagier formula was used by Linial  and Nowik \cite{LinialNowik} to obtain a sharp
asymptotic formula for $\ex[G_n]$ and later by Chmutov and Pittel \cite{ChmutovPittel} to prove
that $G_n$ is asymptotically Gaussian with mean $(n-\log n)/2$ and variance $(\log n)/2$. 
Pippenger and Schlech \cite{PippengerSchleich}, Gamburd \cite{Gamburd},  Fleming 
and Pippenger \cite{FlemingPippenger} studied the random surface
obtained by gluing together edge-wise $n$ oriented polygonal disks, all with the same number of sides, $3$ in \cite{PippengerSchleich}, and $k\ge 3$ in \cite{Gamburd}, \cite{FlemingPippenger};
$kn$ needs to be even, of course. In addition to the uniformity of the ``gluing''
permutation $\alpha$, Gamburd also assumed that those oriented $k$-gons were the cycles of the 
permutation $\beta$ chosen {\it independently\/} of $\alpha$ and {\it uniformly at random\/} among all permutations with $k$-long cycles only, rather than of a fixed such permutation $\beta$. Fleming and Pippenger showed that the cyclic structures of the resulting permutations 
$\gamma:=\alpha\beta$ are equidistributed. Gamburd used a Fourier transform-based bound for the total variation distance between two probability measures on a finite group, due to
Diaconis and Shashahani \cite{DiaconisShashahani}, \cite{Diaconis}, to prove that, when $2\,\text{lcm}\{2,k\}\, |\, kn$, $\gamma$ is asymptotically uniform  on the alternating subgroup $A_{kn}$. Fleming and Pippenger used
Gamburd's result to obtain very sharp approximations for the moments of $V_n$, confirming the conjectures made in \cite{PippengerSchleich} for the case $k=3$. 
Thus the number of vertices $V_n$
in the surface obtained by gluing the given discs and the number of cycles of $\gamma$ are
equidistributed. Chmutov and Pittel \cite{ChmutovPittel} extended Gamburd's result to the
general case of $n$ polygons with arbitrary ``circumferences'', adding up to an even $N\to\infty$: $\gamma$
is asymptotically uniform on $A_N$ (on $A_N^c$ resp.) if $N-n$ and $N/2$ are of the same (opposite resp.) parity.

In this note we combine Gamburd's ideas and the Fourier transform
on $S_{N}$ to give a proof of the identity \eqref{HZ,prob}.  Aside from Murnaghan-Nakayama rule, mostly
for a simple case of one-hook diagrams, our argument uses only elementary enumerative techniques.

 \section{Derivation of Harer-Zagier formula}
Given an even $N=2n$, let $\alpha$ and $\beta$ be two independent random permutations of $[N]$ chosen uniformly among the permutations with all cycles of length $2$, and among all $(N-1)!$
unicyclic permutations respectively. Our task is to determine the generating function of the number of cycles of
the random permutation $\gamma:=\alpha\beta$.

The starting point is the Fourier inversion formula for a general probability measure $P$ on $S_N$, Diaconis \cite{Diaconis}:
\begin{equation}\label{geninv}
P(s)=\frac{1}{N!}\sum_{\la\vdash N}f^{\la}\,\text{tr}\bigl(\rho^{\la}(s^{-1})\hat P(\rho^{\la})\bigr);
\end{equation}
here $\la$ is a generic partition of $N$, $\rho^{\la}$ is the irreducible representation of $S_N$
associated with $\la$, $f^{\la}=\text{dim}(\rho^{\la})$, and $\hat P(\rho^{\la})$ is the matrix-valued Fourier 
transform of $P(\cdot)$ evaluated at $\rho^{\la}$, 
\[
\hat P(\rho^{\la})=\sum_{s\in S_N} \rho^{\la}(s) P(s).
\]
Let us evaluate the RHS of \eqref{geninv} for $P=P_{\sigma}$, the probability measure on $S_N$
induced by $\sigma=\prod_{j=1}^k \sigma_j$, where $\sigma_j$ is uniform on a conjugacy 
class $C_j$. Now $P_{\sigma}=P_{\sigma_1}\star P_{\sigma_2}\star\cdots\star P_{\sigma_k}$, 
the convolution of $P_{\sigma_1},\dots,P_{\sigma_k}$. So, by multiplicativity
of the Fourier transform for convolutions,
\[
\hat{P}_{\sigma}(\rho^{\la}) = \prod_{j=1}^k \hat {P}_{\sigma_j}(\rho^{\la}).
\]
Since each $P_{\sigma_j}$ is supported by the single conjugacy class $C_j$, we have
$\hat P_{\sigma_j}(\rho^{\la})=\tfrac{\chi^{\la}(C_j)}{f^{\la}}\,I_{f^{\la}}$, see \cite{Diaconis}. So
\[
\hat{P}_{\sigma}(\rho^{\la}) = \prod_{j=1}^k\hat {P}_{\sigma_j}(\rho^{\la})=
(f^{\la})^{-k}\prod_{j=1}^k\chi^{\la}(C_j)\,I_{f^{\la}},
\]
and  \eqref{geninv} becomes
\begin{equation}\label{3chi}
\begin{aligned}
P_{\sigma}(s)&=\frac{1}{N!}\sum_{\la}(f^{\la})^{-k+1}\,\left(\prod_{j=1}^k\chi^{\la}(C_j)\right)\,\text{tr}
\bigl(\rho^{\la}(s^{-1})I_{f^{\la}}\bigr)\\
&=\frac{1}{N!}\sum_{\la}(f^{\la})^{-k+1}\chi^{\la}(s)\prod_{j=1}^k\chi^{\la}(C_j).
\end{aligned}
\end{equation}
For a special case $s=\text{id}$ \eqref{3chi} becomes
\[
\pr(\sigma=\text{id})=\frac{1}{N!}\sum_{\la}(f^{\la})^{-k+2}\prod_{j=1}^k\chi^{\la}(C_j),
\]
and since the LHS is ${\cal N}(C_1,\dots,C_k)$, the number of ways to write the identity
permutation as the product of elements of $C_1,\dots,C_k$, divided by $\prod_{j=1}^k |C_j|$,
we obtain the $S_N$-version of Frobenius's  identity 
\begin{equation}\label{Frob}
{\cal N}(C_1,\dots,C_k)=\frac{\prod_{j=1}^k|C_j|}{N!}\sum_{\la}(f^{\la})^{-k+2}\prod_{j=1}^k\chi^{\la}(C_j).
\end{equation}
Zagier's proof in \cite{Zagier} used \eqref{Frob} for $k=3$. In our argument we use
\eqref{3chi} for $k=2$ only, heavily relying instead on arbitrariness of $s\in S_N$. For $\sigma_1=
\alpha$, $\sigma_2=\beta$ and $\sigma=\gamma=\alpha\beta$, this equation becomes
\begin{equation}\label{Pgamma=}
P_{\gamma}(s)=\frac{1}{N!}\sum_{\la}(f^{\la})^{-1}\chi^{\la}(s)\chi^{\la}({\cal C}_2)
\chi^{\la}({\cal C}_N);
\end{equation}
${\cal C}_2$ (${\cal C}_N$ resp.)
consists of all $(N-1)!!$ permutations with cycles of length $2$ only (all $(N-1)!$ unicyclic permutations resp.).  

Let ${\cal C}_{\boldsymbol{\nu}}$ be a generic conjugacy class comprising all permutations with given counts $\nu_j$
of cycles of length $j\in [1,N]$. Define $J=J(\boldsymbol\nu)=\{j\in [1,N]: \nu_j>0\}$. Let $\boldsymbol{\alpha}=(\alpha_1,\alpha_2,\dots)$ be an 
{\it arbitrary\/} composition of $N$ containing $\nu_j$ components equal $j$, ($j\in J$). By Murnaghan-Nakayama rule,
Stanley \cite{Stanley} (Section 7.17, Equation (7.75)),
\begin{equation}\label{Mur}
\chi^{\la}({\cal C}_{\boldsymbol{\nu}})=\sum_T(-1)^{\text{ht}(T)},
\end{equation}
where the sum is over all rim hook diagrams $T$ of shape $\la$ and type $\boldsymbol{\alpha}$, 
i. e. over all ways to empty the diagram $\la$ by
successive deletions of the rim hooks, one hook at a time, of lengths $\alpha_1,\alpha_2,\dots$.
Further $\text{ht}(T)$ is the sum of heights of the individual hooks (number of rows minus $1$) in
the hook diagram $T$. 

This remarkable formula implies that $\chi^{\la}({\cal C}_N)=0$ unless the
diagram $\la$ is a single hook $\la^*$, with one row of length $\la_1$ and one column of height $\la^{1}$,
$\la_1+\la^1=N+1$, in which case 
\begin{equation}\label{chiN}
\chi^{\la}({\cal C}_N)=(-1)^{\la^1-1}. 
\end{equation}
Consider $\chi^{\la^*}({\cal C}_2)$. If $\la_1$ is even, then $\la^1$ is odd, so that for each $T$ the 
rim hook deleted last consists of two first cells in the row.
So $\text{ht}(T)=(\la^1-1)/2$ for every $T$, and the number of $T$'s is the number of ways
to intersperse {\it first\/} $(\la_1-2)/2$  deletions, from left to right,  of domino tiles in the row with
$(\la^1-1)/2$  deletions of domino tiles, from bottom to top, in the column, and this number is
\begin{equation}\label{la1even}
\binom{\tfrac{\la_1-2+\la^1-1}{2}}{\tfrac{\la_1-2}{2}}=\binom{\tfrac{N-2}{2}}{\tfrac{\la_1-2}{2}}\Longrightarrow \chi^{\la^*}({\cal C}_2)=(-1)^{(\la^1-1)/2}\binom{\tfrac{N-2}{2}}{\tfrac{\la_1-2}{2}}.
\end{equation}
Analogously, if $\la_1$ is odd then
\begin{equation}\label{la1odd}
\chi^{\la^*}({\cal C}_2)=(-1)^{\la^1/2}\binom{\tfrac{N-2}{2}}{\tfrac{\la_1-1}{2}}.
\end{equation}
Combining \eqref{chiN}-\eqref{la1odd}, we have
\begin{equation}\label{chiCNxchiC2}
\begin{aligned}
\chi^{\la^*}({\cal C}_2)\chi^{\la^*}({\cal C}_N)&=(-1)^{(\la^1-1)/2}\binom{\tfrac{N-2}{2}}{\tfrac{\la_1-2}{2}},\quad\text{if }\la_1\text{ is even};\\
\chi^{\la^*}({\cal C}_2)\chi^{\la^*}({\cal C}_N)&=(-1)^{(\la^1+2)/2}\binom{\tfrac{N-2}{2}}{\tfrac{\la_1-1}{2}},\quad\,\text{if }\la_1\text{ is odd}.
\end{aligned}
\end{equation}
As for $f^{\la^*}$, applying the Hook formula we obtain
\begin{equation}\label{fla*=}
f^{\la^*}=\frac{N!}{N\prod_{r=1}^{\la_1-1} r\,\prod_{s=1}^{\la^1-1}s}=\binom{N-1}{\la_1-1}.
\end{equation}
Introducing
\begin{equation}\label{defF}
\begin{aligned}
F(N,\la_1)&:=(-1)^{(\la^1-1)/2}\binom{\tfrac{N-2}{2}}{\tfrac{\la_1-2}{2}}/\binom{N-1}{\la_1-1}\quad\text{if }\la_1\text{ is even};\\
F(N,\la_1)&:=(-1)^{(\la^1+2)/2}\binom{\tfrac{N-2}{2}}{\tfrac{\la_1-1}{2}}/\binom{N-1}{\la_1-1}\quad\text{if }\la_1\text{ is odd},
\end{aligned}
\end{equation}
we transform \eqref{geninv} into
\begin{equation}\label{specinv}
P_{\gamma}(s)=\frac{1}{N!}\sum_{\la_1=1}^NF(N,\la_1)\chi^{\la^*}(s);
\end{equation}
here $\la^*$ is a hook of size $N$, with the row of length $\la_1$, and the column of height
$\la^1=N+1-\la_1$.

Turn  to  $\chi^{\la^*}(s)$. Let  $s\in {\cal C}_{\boldsymbol{\nu}}$. Introduce $\ell=\ell(\boldsymbol\nu):=\min J=\min\{j:\nu_j>0\}$, the size of the smallest positive component of $\alpha$.

{\it Case\/} $\la_1\le\ell$. Here, for every $\boldsymbol\alpha$, there is only one rim hook tableau $T$, since the arm of the hook deleted last must be the whole row of $\la^*$, and, denoting $\nu=\sum_j\nu_j$, 
\begin{equation}\label{Case1}
\text{ht}(T)=\la^1-\nu\equiv (\la^1+\nu)\,(\text{mod } 2)\Longrightarrow \chi^{\la^*}(s)=(-1)^{\la^1+\nu}.
\end{equation}

{\it Case\/} $\la_1> \ell$. Since in \eqref{Mur}
the composition $\boldsymbol{\alpha}$ can be chosen arbitrarily, let us assume
that components of $\boldsymbol\alpha$ are non-increasing. So the composition
$\boldsymbol\alpha$ consists of the segment formed by
all components of the largest size, followed  by the segment formed by
all components of the second largest size, and so on, all the way to the terminal segment formed
by all components of the smallest size $\ell$.  Therefore in a generic 
tableau $T$ the last rim hook is a diagram $\mu^*$ comprising one row and one column of sizes $\mu_1$ and $\mu^!$, $\mu_1+\mu^1=\ell+1$. All the other rim hooks in $T$ are either
horizontal or vertical, successively deleted from the row and from the column of $\la^*$ respectively. Let 
$h_r$ be the number of those horizontal hooks of size $r\in J$; so $h_r\in [0,\nu_r]$ for $r>\ell$,
and $h_{\ell}\in [0,\nu_{\ell}-1]$. The admissible $\bold h=\{h_r\}$ must meet the additional constraint
\begin{equation}\label{+mu1=la1}
\sum_{r\in J} h_r r +\mu_1= \la_1.
\end{equation}
The total number of the tableaux $T$, with parameters $\mu_1,\,\mu^1,\,\bold h$, is
\[
\binom{\nu_{\ell}-1}{h_{\ell}}\prod_{r\in J\setminus\{\ell\}}\binom{\nu_r}{h_r}=
\prod_{r\in J}\binom{\nu_r-\delta_{\ell,r}}{h_r},\qquad\delta_{\ell,r}:=1_{\{r=\ell\}}.
\]
Further, the $\mu^1$-long leg of the last hook contributes $\mu^1-1$ to the height $\text{ht}(T)$,  while the total contribution to $\text{ht}(T)$ of those vertical rim hooks is $(\la^1-\mu^1)$, the sum of their sizes,  minus 
$\sum_r(\nu_r-\delta_{\ell,r}-h_r)$, their total number. So
\begin{align*}
\text{ht}(T)&=(\mu^1-1)+(\la^1-\mu^1) -\sum_{r\in J}\, (\nu_r-\delta_{\ell,r}-h_r)\\
&\equiv \la^1-1+\sum_{r\in J}\,(\nu_r-\delta_{\ell,r}-h_r)\,(\text{mod }2).
\end{align*}
Therefore the total contribution to $\chi^{\la^*}(s)$ of the rim hook tableaux $T$ with the last rim hook $\mu^*$ is
\begin{align*}
&(-1)^{\la^1-1}\sum_{\bold h\text{ meets }\eqref{+mu1=la1}}\,\prod_{r\in J} (-1)^{\nu_r-\delta_{\ell,r}-
h_r}\binom{\nu_r-\delta_{\ell,r}}{h_r}\\
&=(-1)^{\la^1-1} [\xi^{\la_1-\mu_1}] \prod_{r\in J}\sum_{h_r}(-1)^{\nu_r-\delta_{\ell,r}-h_r}(\xi^r)^{h_r}
\binom{\nu_{\ell}-\delta_{\ell,r}}{h_r}\\
&=(-1)^{\la^1-1} [\xi^{\la_1-\mu_1}] \prod_{r\in J}(\xi^r-1)^{\nu_r-\delta_{\ell,r}}.
\end{align*}
To get $\chi^{\la^*}(s)$ we need to sum this expression for all $1\le \mu_1\le \min\{\ell,\lambda_1\}$,
\linebreak i. e. for $1\le\mu_1\le \ell$, because  $\lambda_1>\ell$. For those $\mu_1$, $\la_1-\mu_1$ ranges from $\la_1-\ell$ to $\la_1-1$. Thus
\begin{equation}\label{Case2}
\begin{aligned}
\chi^{\la^*}(s)&=(-1)^{\la^1-1}\sum_{t=\la_1-\ell}^{\la_1-1}[\xi^t]\prod_{r\in J}(\xi^r-1)^{\nu_r-\delta_{\ell,r}}\\
&=(-1)^{\la^1-1}\,[\xi^{\la_1-1}] \left(\sum_{\tau=0}^{\ell-1}\xi^{\tau}\right)\prod_{r\in J}(\xi^r-1)^{\nu_r-\delta_{\ell,r}}\\
&=(-1)^{\la^1-1}\,[\xi^{\la_1-1}] \,\frac{\xi^{\ell}-1}{\xi-1}\prod_{r\in J}(\xi^r-1)^{\nu_r-\delta_{\ell,r}}\\
&=(-1)^{\la^1-1}\,[\xi^{\la_1-1}]\, (\xi-1)^{-1}\prod_{r\in J}(\xi^r-1)^{\nu_r}\\
&=(-1)^{\la^1+\nu}\,[\xi^{\la_1}]\, \frac{\xi}{1-\xi}\prod_{r\in J}(1-\xi^r)^{\nu_r}.
\end{aligned}
\end{equation}
Observe that, for $\la_1\le \ell$, we have $r>\la_1-1$ for all $r\in J$; so the bottom RHS in  \eqref{Case2} is
\begin{multline*}
(-1)^{\la^1+\nu}[\xi^{\la_1-1}]\, (1-\xi)^{-1}\prod_{r\in J}(\xi^r-1)^{\nu_r}\\
=(-1)^{\la^1+\nu}[\xi^{\la_1-1}]\, (1-\xi)^{-1}=(-1)^{\la^1+\nu},
\end{multline*}
which is the value of $\chi^{\la^*}(s)$ for $\la_1<\ell$, see \eqref{Case1}. 
Therefore, the bottom line expression in \eqref{Case2} for $\chi^{\la^*}(s)$ holds for all $\la_1$.

Though seemingly unwieldy, the formula \eqref{specinv} for $\pr(\gamma=s)$ together with 
\eqref{Case2}  instantly lead to a promising, intermediate, expression for the {\it marginal\/} distribution of $X_n$, the number of cycles of $\gamma$.
\begin{Theorem}\label{P(XN)=} For $\nu\in [1,N]$, $N=2n$,
\begin{equation}\label{P(XN)=expl}
\pr(X_n=\nu)=[x^Ny^{\nu}] \sum_{\la_1=1}^N\!(-1)^{\la^1}F(N,\la_1)
[\xi^{\la_1}]\!\left[\frac{\xi}{1-\xi}\left(\frac{1-x}{1-x\xi}\right)^y\right],
 \end{equation}
or equivalently
\begin{equation}\label{gen,nu}
\ex[y^{X_n}]=\sum_{\la_1=1}^N(-1)^{\la^1}F(N,\la_1)
\cdot [\xi^{\la_1}x^N]\!\left[\frac{\xi}{1-\xi}\left(\frac{1-x}{1-x\xi}\right)^y\right].
\end{equation}
\end{Theorem}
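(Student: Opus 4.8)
The plan is to start from the Fourier-inversion formula \eqref{specinv}, specialized to $\gamma$, and to sum $P_\gamma(s)$ over all $s$ in a conjugacy class $\Cal C_{\boldsymbol\nu}$ in order to pass from the pointwise probability to the distribution of the number of cycles. Concretely, since $\pr(X_n=\nu)$ is the sum of $\pr(\gamma=s)$ over all $s$ having exactly $\nu$ cycles, I would write $\pr(X_n=\nu)=\sum_{\boldsymbol\nu:\,\nu(\boldsymbol\nu)=\nu}|\Cal C_{\boldsymbol\nu}|\,P_\gamma(s_{\boldsymbol\nu})$ and substitute the closed form for $\chi^{\la^*}(s)$ from the bottom line of \eqref{Case2}, namely $\chi^{\la^*}(s)=(-1)^{\la^1+\nu}[\xi^{\la_1}]\,\tfrac{\xi}{1-\xi}\prod_{r\in J}(1-\xi^r)^{\nu_r}$, which is valid for every $\la_1$. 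The sign $(-1)^{\la^1}$ and the factor $F(N,\la_1)$ from \eqref{specinv} pull out of the $\boldsymbol\nu$-sum, while the residual sign $(-1)^\nu$ is exactly the power of $y$ being tracked, so it will be absorbed by the substitution $y\mapsto -y$ implicitly present in the final generating function; I would keep careful track of it.

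The combinatorial heart is then the evaluation of
\[
\sum_{\boldsymbol\nu:\,\nu(\boldsymbol\nu)=\nu}|\Cal C_{\boldsymbol\nu}|\prod_{r\in J}(1-\xi^r)^{\nu_r},
\]
where $|\Cal C_{\boldsymbol\nu}|=N!/\bigl(\prod_r r^{\nu_r}\nu_r!\bigr)$ is the standard conjugacy-class size in $S_N$. This is a classical exponential-formula computation: summing $\tfrac{1}{N!}|\Cal C_{\boldsymbol\nu}|\prod_r\bigl((1-\xi^r)\bigr)^{\nu_r}$ over all $\boldsymbol\nu$ with $\sum_r r\nu_r=N$ and marking the total number of cycles $\nu=\sum_r\nu_r$ by $y$ and $N$ by $x$ gives
\[
\exp\!\left(y\sum_{r\ge1}\frac{(1-\xi^r)x^r}{r}\right)=\exp\!\bigl(-y\log(1-x)+y\log(1-x\xi)\bigr)=\left(\frac{1-x\xi}{1-x}\right)^{y}.
\]
Comparing with the target \eqref{P(XN)=expl}, which features $\bigl(\tfrac{1-x}{1-x\xi}\bigr)^y$, I expect the $y\mapsto -y$ (equivalently, the $(-1)^\nu$ sign) to flip this into the desired form; so the step that needs the most care is making sure the signs $(-1)^{\la^1}$, $(-1)^\nu$ in \eqref{specinv}–\eqref{Case2} are reconciled correctly with the exponent of $y$ and with the definition of $F(N,\la_1)$. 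After the exponential-formula step, the factor $\tfrac{\xi}{1-\xi}$ and the coefficient extraction $[\xi^{\la_1}]$ remain untouched, and collecting the $[x^N][y^\nu]$ coefficients yields \eqref{P(XN)=expl}; multiplying by $y^\nu$ and summing over $\nu$ gives the equivalent form \eqref{gen,nu}.

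The main obstacle I anticipate is purely bookkeeping: the interplay of the two independent formal variables $\xi$ (which lives inside the Murnaghan–Nakayama expansion and is extracted at order $\la_1$) and $x$ (which tracks $N$), together with the three sign contributions, must be arranged so that nothing is double-counted and the range $\la_1\in[1,N]$ survives intact. I would also double-check the degenerate cases — $\nu=N$ (all fixed points, forcing $\xi$-degree considerations) and $\la_1=1$ or $\la_1=N$ — against \eqref{Case1} to confirm the unified formula behaves correctly, exactly as the excerpt already verifies for $\la_1\le\ell$. Once the exponential-formula identity is in hand, the passage to both \eqref{P(XN)=expl} and \eqref{gen,nu} is a one-line coefficient manipulation.
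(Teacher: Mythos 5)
Your proposal is correct and follows essentially the same route as the paper: sum \eqref{specinv} over the conjugacy classes with $\nu$ cycles, insert the unified bottom-line formula of \eqref{Case2}, apply the Cauchy class-size formula and the exponential formula, and absorb the residual $(-1)^{\nu}$ into the exponent (the paper does this by placing the minus sign inside the product $\prod_r\bigl(-(1-\xi^r)/r\bigr)^{\nu_r}$, which is exactly your $y\mapsto-y$ flip turning $\bigl(\tfrac{1-x\xi}{1-x}\bigr)^{y}$ into $\bigl(\tfrac{1-x}{1-x\xi}\bigr)^{y}$). All the sign bookkeeping you flag does reconcile as you expect.
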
 
\begin{proof} By \eqref{specinv},
\[
\pr(X_n=\nu)=\frac{1}{N!}\sum_{\la_1=1}^NF(N,\la_1)\!\!\sum_{s\,: \,\nu(s)=\nu}\!\!
\chi^{\la^*}(s).
\]
Here, by \eqref{Case2} and Cauchy formula 
$
|{\cal C}_{\boldsymbol\nu}|=N!\prod_r\tfrac{1}{r^{\nu_r}\nu_r!},
$
we obtain
\begin{align*}
&\sum_{s: \boldsymbol{\nu}(s)=\boldsymbol\nu}\chi^{\la^*}(s)=(-1)^{\la^1}N!\,[\xi^{\la_1}]
\frac{\xi}{1-\xi}
\sum_{\sum_r\nu_r=\nu,\atop \sum_r r\nu_r=N}\,\,\prod_{r\ge 1}\left(\frac{-\,(1-\xi^r)}{r}\right)^{\nu_r}
\!\!\!\!\!\big/\nu_r!\\
&=(-1)^{\la^1}N!\,[\xi^{\la_1}]\frac{\xi}{1-\xi}\,[x^Ny^{\nu}]\prod_{r\ge 1}\left[\,\sum_{\nu_r\ge 0}
\left(\frac{-\,yx^r(1-\xi^r)}{r}\right)^{\nu_r}\!\!\!\big/\nu_r!\right]\\
&=(-1)^{\la^1}N!\,[\xi^{\la_1}]\frac{\xi}{1-\xi}\,[x^Ny^{\nu}]\exp\left(\!-\sum_{r\ge 1}\frac{yx^r(1-\xi^r)}{r}
\right)\\
&=(-1)^{\la^1}N!\,[\xi^{\la_1}]\frac{\xi}{1-\xi}\,[x^Ny^{\nu}]\exp\left(\!-y\log\frac{1}{1-x}+y\log\frac{1}{1-x\xi}
\right)\\
&=(-1)^{\la^1}N!\,[\xi^{\la_1}]\frac{\xi}{1-\xi}\,[x^Ny^{\nu}]\left(\frac{1-x}{1-x\xi}\right)^y.
\end{align*}
The proof  is complete.
\end{proof}
Let us use \eqref{gen,nu} to get an explicit formula for $\pr(X_n=\nu)$. To begin, from  \eqref{defF} 
it follows that, both for $\la_1=2m$, ($0<m\le n$) and $\la_1=2m+1$, ($0\le m<n-1$),
\begin{equation}\label{Q(n,m)=}
\begin{aligned}
(-1)^{\la^1}F(N,\la_1)&=(-1)^{n-m+1}\frac{\binom{n-1}{m-1}}{\binom{2n-1}{2m-1}}
=(-1)^{n-m+1}Q(n,m),\\
Q(n,m)&:=\frac{(2m-1)!!\,\bigl(2(n-m)-1\bigr)!!}{(2n-1)!!},
\end{aligned}
\end{equation}
with $(-1)!!:=1$.  Next we evaluate $A(n,\nu,\la_1)$, the coefficient of $x^Ny^{\nu}\xi^{\la_1}$ in the
expansion of $\tfrac{\xi}{1-\xi}\left(\tfrac{1-x}{1-x\xi}\right)^y$. It is well known, Comtet \cite{Comtet} (Section 5.5),  that
\begin{equation}\label{wellknown}
\frac{1}{\nu!}\left(\log\frac{1}{1-\eta}\right)^{\nu}=\sum_{\ell\ge \nu}\eta^{\ell}\,\frac{s(\ell,\nu)}{\ell!},
\end{equation}
where $s(\ell,\nu)$ is the (first kind Stirling) number of permutations of $[\ell]$ with $\nu$ cycles. So, setting 
\[
\frac{1-x}{1-x\xi}=\frac{1}{1-\eta}\Longleftrightarrow \eta=\frac{x(\xi-1)}{1-x},
\]
we have
\begin{align*}
[y^{\nu}]\left(\frac{1-x}{1-x\xi}\right)^y&=\frac{1}{\nu!}\left(\frac{1}{1-\eta}\right)^{\nu}
=\sum_{\ell\ge\nu}\frac{s(\ell,\nu)}{\ell!}\left(\frac{x(\xi-1)}{1-x}\right)^{\ell}.
\end{align*}
Next, for $\ell\le 2n$,
\begin{align*}
[x^{2n}]\left(\frac{x(\xi-1)}{1-x}\right)^{\ell}=[x^{2n-\ell}]\left(\frac{\xi-1}{1-x}\right)^{\ell}
=(\xi-1)^{\ell}\binom{-\ell}{2n-\ell}(-1)^{2n-\ell},
\end{align*}
and finally
\[
[\xi^{\la_1}]\frac{\xi}{1-\xi}(\xi-1)^{\ell}=-[\xi^{\la_1-1}](\xi-1)^{\ell-1}=(-1)^{\ell-\la_1+1}\binom{\ell-1}{\la_1-1}.
\]
Collecting the pieces we arrive at
\begin{equation}\label{collect}
\begin{aligned}
A(n,\nu,\la_1)&=
(-1)^{\la_1-1}\sum_{\ell\ge \nu}\frac{s(\ell,\nu)}{\ell!}\binom{-\ell}{2n-\ell}\binom{\ell-1}{\la_1-1}\\
&=(-1)^{\la_1-1}\binom{2n-1}{2n-\la_1}\sum_{\ell\ge\nu}(-1)^{\ell}\,\frac{s(\ell,\nu)}{\ell!}\binom{2n-\la_1}{2n-\ell}.
\end{aligned}
\end{equation}
Let $P_e(X_n=\nu)$ and $P_o(X_n=\nu)$ denote, respectively,  the contribution of even $\la_1$  and odd $\la_1$ to the RHS of \eqref{P(XN)=expl}. To compute $P_e(X_n=\nu)$ and $P_o(X_n=\nu)$ we will need two simple identities
\begin{align*}
\sum_{j=0}^a(-1)^j\binom{a}{j}\binom{2j}{b}&=(-1)^a\binom{a}{b-a}2^{2a-b},\\
\sum_{j=0}^a(-1)^j\binom{a}{j}\binom{2j+1}{b}&=(-1)^a\left[\binom{a}{b-a}2^{2a-b}+\binom{a}{b-a-1}
2^{2a+1-b}\right],
\end{align*}
directly implied by
\begin{align*}
\sum_{b\ge 0}x^b\sum_{j=0}^a(-1)^j\binom{a}{j}\binom{2j}{b}&=[1-(1+x)^2]^a=(-1)^a x^a(2+x)^a,\\
\sum_{b\ge 0}x^b\sum_{j=0}^a(-1)^j\binom{a}{j}\binom{2j+1}{b}&=(-1)^a\bigl(x^a+x^{a+1}\bigr)(2+x)^a.
\end{align*}
(see Gould \cite{Gould} or  www.math.wvu.edu/gould/, Vol.4.PDF, (1.62)).
By \eqref{collect}, and \eqref{Q(n,m)=},
\begin{equation}\label{Pe=}
\begin{aligned}
P_e(X_n=\nu)&=\sum_{m=1}^n(-1)^{n-m}Q(n,m)\binom{2n-1}{2(n-m)}\\
&\,\,\,\,\,\times\sum_{\ell\ge \nu}
(-1)^{\ell}\,\frac{s(\ell,\nu)}{\ell!}\binom{2(n-m)}{2n-\ell}\\
&=\sum_{\ell\ge \nu}
(-1)^{\ell}\,\frac{s(\ell,\nu)}{\ell!}\sum_{j=0}^n(-1)^{j}\binom{n-1}{j}\binom{2j}{2n-\ell}\\
&=\sum_{\ell\ge\nu}(-1)^{\ell+n-1}2^{\ell-2}\frac{s(\ell,\nu)}{\ell!}\binom{n-1}{\ell-2}.
\end{aligned}
\end{equation}
Similarly
\begin{equation}\label{Po=}
\begin{aligned}
P_o(X_n=\nu)&=
\sum_{\ell\ge \nu}
(-1)^{\ell}\,\frac{s(\ell,\nu)}{\ell!}\sum_{j=0}^{n-1}(-1)^{j}\binom{n-1}{j}\binom{2j+1}{2n-\ell}\\
&=\sum_{\ell\ge\nu}(-1)^{\ell+n-1}\frac{s(\ell,\nu)}{\ell!}\left[2^{\ell-2}\binom{n-1}{\ell-2}+2^{\ell-1}
\binom{n-1}{\ell-1}\right].
\end{aligned}
\end{equation}
Combining \eqref{Pe=} and \eqref{Po=} we have
\begin{equation}\label{P=}
\begin{aligned}
\pr(X_n=\nu)&=\sum_{\ell\ge\nu}(-1)^{\ell+n-1}2^{\ell-1}\frac{s(\ell,\nu)}{\ell!}\left[\binom{n-1}{\ell-2}
+\binom{n-1}{\ell-1}\right]\\
&=\sum_{\ell\ge\nu}(-1)^{\ell+n-1}2^{\ell-1}\frac{s(\ell,\nu)}{\ell!}\binom{n}{\ell-1}.
\end{aligned}
\end{equation}
(This simple formula appears to be new.) Therefore
\begin{align*}
\pr(X_n=\nu)&=\frac{1}{2}\sum_{\ell\ge\nu}2^{\ell}\,\frac{s(\ell,\nu)}{\ell!}\,[x^{n+1-\ell}](1+x)^{-\ell}\\
&=\frac{1}{2}[x^{n+1}]\sum_{\ell\ge\nu}\frac{s(\ell,\nu)}{\ell!}\left(\frac{2x}{1+x}\right)^{\ell}
=\frac{1}{2}[x^{n+1}]\frac{1}{\nu!}\left(\log\frac{1+x}{1-x}\right)^{\nu},
\end{align*}
by \eqref{wellknown}, as $\tfrac{2x}{1+x}=1-\tfrac{1-x}{1+x}$. Consequently
\[
\ex[y^{X_n}]=\frac{1}{2}[x^{n+1}]\sum_{\nu\ge 1}\frac{1}{\nu!}\left(\log\frac{1+x}{1-x}\right)^{\nu}
=\frac{1}{2}[x^{n+1}]\left[\left(\frac{1+x}{1-x}\right)^y-1\right],
\]
whence, setting $X_0=1$,
\[
\sum_{n\ge 0}x^{n+1}\ex[y^{X_n}]=\frac{1}{2}\left[\left(\frac{1+x}{1-x}\right)^y-1\right] 
\leftrightarrow 
1+2\sum_{n\ge 0}x^{n+1}\ex[y^{X_n}]=\left(\frac{1+x}{1-x}\right)^y,
\]
which is the Harer-Zagier formula for the genus $G_n$, as $X_n=n+1-2G_n$.\\

\noindent {\bf Acknowledgment.\/}  I owe a debt of gratitude to Huseyin Acan for innumerable
discussions of the random chord diagrams. It is my pleasure to thank  Sergei Chmutov and Sergei Duzhin for introducing me to  the remarkable Harer-Zagier formula. I am
grateful to Sergei Chmutov for patiently explaining to me the topological issues  of the gluing models, and for a very helpful feedback on an initial draft of this paper.

\end{document}